\newtheorem{thm}{Theorem}[section]
\newtheorem{lem}[thm]{Lemma}
\theoremstyle{definition}
\newtheorem{defn}[thm]{Definition}
\theoremstyle{remark}
\newtheorem{rem}[thm]{Remark}
\numberwithin{equation}{section}
\begin{document}

\title[Rate of convergence for a two-scale Galerkin scheme]{Rate of convergence for a Galerkin scheme approximating a two-scale  reaction-diffusion system with nonlinear transmission condition}%
\author{Adrian Muntean}%
\address{Center for Analysis Scientific computing and Applications (CASA), Department of Mathematics and Computer Science, Institute of Complex Molecular Systems (ICMS), Eindhoven University of Technology, PO Box 513, 5600 MB, Eindhoven, The Netherlands}%
\email{a.muntean@tue.nl}%
\author{Omar Lakkis}%
\address{Department of Mathematics, University of Sussex, Falmer, Brighton, BN1 9RF, UK }%
\email{o.lakkis@sussex.ac.uk}%
\subjclass{35 K 57, 65 L 70, 80 A 32, 35 B 27}%
\keywords{Two-scale reaction-diffusion system, nonlinear transmission conditions, Galerkin method, rate of convergence, distributed-microstructure model}%

\begin{abstract} We study a two-scale reaction-diffusion system with nonlinear reaction terms and a nonlinear transmission condition (remotely ressembling Henry's law) posed  at air-liquid interfaces.
We prove the rate of convergence of the two-scale Galerkin method proposed in \cite{Maria} for approximating this system in the case when both the microstructure and macroscropic domain are two-dimensional. The main difficulty is created by the presence of a boundary nonlinear term entering the transmission condition. Besides using the particular two-scale structure of the system, the ingredients of the proof include two-scale interpolation-error estimates, an interpolation-trace inequality, and improved regularity estimates.
\end{abstract}
\maketitle

\section{Introduction}

Reaction and transport phenomena in porous media are the governing processes in many
natural and industrial systems. Not only do these reaction and transport phenomena occur
at different space and time scales, but it is also the porous medium itself which is
heterogeneous with heterogeneities present at many spatial scales. The mathematical challenge in this context is to understand and then control the interplay between nonlinear production terms with intrinsic multiple-spatial structure and structured transport in porous media.  To illustrate this scenario, we consider a large domain with randomly distributed heterogeneities where complex
two-phase-two-component processes are relevant only in a small (local) subdomain. This
subdomain (which sometimes is refered to as {\em distributed microstructure}\footnote{Further keywords are: Barenblatt's parallel-flow models
, totally-fissured and partially-fisured media, or double(dual-)-porosity models.} following the terminology of R. E. Showalter) needs fine resolution as the complex processes are governed by small-scale effects. The PDEs used in this particular context need to incorporate two distinct spatial scales: a macroscale (for the large domain, say $\Omega$) and a microscale (for the microstructure, say $Y$). Usually, $x\in \Omega$ and $y\in Y$ denote macro and micro variables.

\subsection{Problem statement}

Let $S$ be the time interval $]0,T[$ for a given fixed $T>0$. We consider the following two-spatial-scale PDE system describing the evolution of the  the vector $(U,u,v)$:

\begin{equation}\label{eqU}
\theta\partial_t U(t,x)- D\Delta U(t,x) = -\int_{\Gamma_R}b(U(t,x)-u(t,x,y))d\lambda^2_y \quad \mbox{ in }  S\times\Omega,
\end{equation}
\begin{eqnarray}
&&\partial_t u(t,x,y)-d_1\Delta_y u(t,x,y)=-k\eta(u(t,x,y),v(t,x,y)) \quad \,\,\, \mbox{ in } S\times\Omega\times Y,\\
&&\partial_t v(t,x,y)-d_2\Delta_y v(t,x,y)=-\alpha k\eta(u(t,x,y),v(t,x,y))\quad \mbox{ in } S\times\Omega\times Y,
\end{eqnarray}\\
with macroscopic non-homogeneous Dirichlet boundary condition
\begin{equation}\label{macro-bc}
U(t,x)=U^{ext}(t,x) \quad \mbox{ on } S\times \partial\Omega,
\end{equation}
and microscopic homogeneous Neumann boundary conditions
\begin{eqnarray}\label{micro-bc}
&&\nabla_y u(t,x,y)\cdot n_y = 0 \quad \mbox{ on } S\times \Omega \times \Gamma_N, \\
&&\nabla_y v(t,x,y)\cdot n_y=0  \quad \mbox{ on } S\times \Omega \times \Gamma.
\end{eqnarray}
The coupling between the micro- and the macro-scale is made by the following nonlinear transmission condition  on $\Gamma_R$
\begin{equation}\label{micro-macro-bc}
-\nabla_y u(t,x,y)\cdot n_y=-b(U(t,x)-u(t,x,y)) \quad \mbox{ on } S \times \Omega\times \Gamma_R.
\end{equation}
The initial conditions
\begin{eqnarray}
U(0,x)& = & U_I(x) \hspace{.7cm} \mbox{ in } \Omega,\\
u(0,x,y)& = & u_I(x,y)  \hspace{.4cm}\mbox{ in } \Omega\times Y,\\
v(0,x,y) & = & v_I(x,y)  \hspace{.44cm} \mbox{ in } \Omega\times Y,\label{eq-ic}
\end{eqnarray}
close the system of mass-balance equations.

Continuing along the lines of \cite{Maria}, the central theme of this paper is understanding the role of the nonlinear term $b(\cdot)$ in what the {\em a priori} and {\em a posteriori} error analyses of (\ref{eqU})--(\ref{eq-ic})  are concerned. Within the frame of this paper, we focus on the {\em a priori} analysis and consequently prepare a functional framework for the {\em a posteriori} analysis which is still missing for such situations. Since our problem is new,  the existing well-established literature on {\em a priori error} estimates for linear two-scale problems (cf. e.g. \cite{HS}) cannot guess the rate of convergence of the Galerkin approximants to the weak solution to (\ref{eqU})--(\ref{eq-ic}). Therefore, a new analysis approach is needed. Notice that the main difficulty is created by the presence of a boundary nonlinear term entering the transmission condition (\ref{micro-macro-bc}). Here we prove the rate of convergence of the two-scale Galerkin method proposed in \cite{Maria} for approximating this system in the case when both the microstructure and macroscropic domain are two-dimensional, see Theorem \ref{rate}. Nevertheless, we expect that the results can be extended to the 3D case under  stronger assumptions, for instance, on the regularity of $\Gamma_R$ and data.  
Besides using the particular two-scale structure of the system, the ingredients of the proof include two-scale interpolation-error estimates, an interpolation-trace inequality, and improved regularity estimates.

The paper is structured in the following fashion:

\tableofcontents

\subsection{Geometry of the domain}\label{geometry}

We assume the domains $\Omega$ and $Y$ to be connected in ${\mathbb{R}}^3$ with Lipschitz continuous  boundaries.  We denote by $\lambda^k$ the $k$-dimensional Lebesgue measure ($k\in \{2,3\}$), and assume that $\lambda^3(\Omega)\neq 0$ and $\lambda^3(Y)\neq 0$. Here, $\Omega$ is the macroscopic domain, while $Y$ denotes the part of a standard pore associated with microstructures within $\Omega$. To be more precise,
$Y$ represents the wet part of the pore.  The boundary of $Y$ is denoted by $\Gamma$, and consists of two distinct parts
$$
\Gamma =\Gamma_R\cup\Gamma_N.
$$
Here $\Gamma_R\cap\Gamma_N = \emptyset$, and $\lambda_y^2(\Gamma_R)\neq 0$. Note that $\Gamma_N$ is the part of $\partial Y$ that is isolated with respect to transfer of mass (i.e. $\Gamma_N$ is a Neumann boundary), while $\Gamma_R$ is the gas/liquid interface
along which  the mass transfer takes place. Throughout the paper $\lambda_y^k$ ($k\in \{1,2\}$) denotes the $k$-dimensional Lebesgue  measure on the boundary $\partial Y$ of the microstructure.

\subsection{Physical interpretation of (\ref{eqU})--(\ref{eq-ic})}

 $U$, $u$, and $v$ are the  mass concentrations assigned to the chemical species $A_1$, $A_2$, and $A_3$ involved in the reaction mechanism
 \begin{equation}\label{reaction}
\mathrm{A_1\leftrightharpoons A_2+A_3 \stackrel{k}{\longrightarrow} H_2O + \mbox{ products}}.
\end{equation}
For instance, the natural carbonation of stone  follows the mechanism (\ref{reaction}), where $\mathrm{A_1:=CO_2(g)}$, $\mathrm{A_2:=CO_2(aq)}$, and  $\mathrm{A_3:=Ca(OH)_2(aq)}$, while the product of reaction is in this case $\mathrm{CaCO_3(aq)}$. We refer the reader to \cite{Aiki} for details on the mathematical analyis of a (macroscopic) reaction-diffusion system with free boundary describing the evolution of (\ref{reaction}) in concrete.

Besides overlooking what happens with the produced $\mathrm{CaCO_3(aq)}$, the PDE system also indicates that we completely neglect the water as reaction product in (\ref{reaction}) as well as its motion inside the microstructure $Y$. A correct modeling of the role of water is possible. However, such an extension of the model would essentially complicate the structure of the PDE system and would bring us away from our initial goal. On the other hand, it is important to observe  that the sink/source term 
\begin{equation}\label{integral_term}
 -\int_{\Gamma_R}b(U-u)d\lambda^2_y
\end{equation}
 models the contribution in the effective equation (\ref{eqU}) coming from mass transfer between air and water regions at microscopic level. Surface integral terms like (\ref{integral_term}) have been obtained in the context of two-scale models (for the so-called Henry and Raoult laws \cite{Danckwerts} -- linear choices of $b(\cdot)$!) by various authors; see for instance \cite{HJM} and references cited therein. 
The parameter $k$ is the reaction constant for the competitive reaction between the species $\mathrm{A_2}$ and $\mathrm{A_3}$, while $\alpha$ is the ratio of the molecular weights of these two species.  Furthermore, we denote by $\theta$ the porosity of the medium.

\section{Technical preliminaries}

\subsection{Assumptions on data, parameters, and spatial domains $\Omega, Y$}

For the transport coefficients, we assume that
\begin{enumerate}
\item[(A1)]  $D>0, d_1>0, d_2>0$.
\end{enumerate}
Concerning the micro-macro transfer and the reaction terms, we suppose:
\begin{enumerate}
\item[(A2)] The sink/source term $b:\mathbb{R}\to \mathbb{R}_+$ is globally Lipschitz, and $b(z)=0$ if $z\leq 0$. This implies that it exists a constant $\hat c>0$ such that $b(z)\leq \hat c z$ if $z>0$;
\item[(A3)] $\eta: \mathbb{R}\times \mathbb{R} \to \mathbb{R}_+$ is defined by $\eta(r,s):= R(r)Q(s)$, where $R,Q$ are globally Lipschitz continuous, with Lipschitz constants $c_R$ and $c_Q$ respectively. Furthermore, we assume that $R(r)>0$ if $r>0$ and $R(r)=0$ if $r \leq 0$,  and similarly, $Q(s)>0$ if $s>0$ and $Q(s)=0$ if $s \leq 0$.

Finally,  we have  $k>0,$ and $ \alpha >0$.
\end{enumerate}
For the initial and boundary functions, we assume
\begin{enumerate}
\item[(A4)] $U^{ext}\in H^1(S,H^2(\Omega)) \cap H^2(S, L^2(\Omega))\cap L^\infty_+(S\times \Omega)$, $U_I \in H^2(\Omega)\cap L^\infty_+(\Omega)$,
$U_I-U^{ext}(0,\cdot)\in H_0^1(\Omega)$,
$u_I,v_I \in L^2(\Omega, H^2(Y))\cap L^\infty_+(\Omega\times Y)$.
\end{enumerate}
For the approximation with piecewise linear functions (finite elements), we assume: 
\begin{enumerate}
\item[(A5)] $\Omega$ and $Y$ are convex domains in $\mathbb{R}^2$ with sufficiently smooth boundaries;
\item[(A6)] $h^2\max\{\gamma_1,\gamma_3\}<1$, where $h,\gamma_1,$ and $\gamma_3$ are strictly positive constants entering the statement of Lemma \ref{interpolation-error}.
\end{enumerate}

\subsection{Weak formulation. Known results}

Our concept of weak solution is given in the following.

\begin{defn}\label{def_weak_formulation}
A triplet of functions $(U, u, v)$ with $(U-U^{ext})\in L^2(S, H_0^1(\Omega))$, $\partial_t U\in L^2(S \times\Omega)$, $(u,v)\in L^2(S, L^2(\Omega, H^1(Y)))^2$, $(\partial_t u, \partial_t v)\in L^2(S \times\Omega\times Y)^2$, is called a weak solution of (\ref{eqU})--(\ref{eq-ic}) if for a.e. $t\in S$ the following identities hold
\begin{eqnarray}
&&\frac{d}{dt}\int_\Omega \theta U\varphi + \int_\Omega D\nabla U\nabla\varphi +\int_\Omega\int_{\Gamma_R} b(U-u)\varphi d\lambda^2_ydx=0\label{weakU}\\
&&\frac{d}{dt}\int_{\Omega\times Y} u\phi + \int_{\Omega\times Y} d_1\nabla_y u\nabla_y\phi - \int_{\Omega}\int_{\Gamma_R} b(U-u)\phi d\lambda^2_ydx \nonumber \\
&&\hspace{5.5cm}+k\int_{\Omega\times Y} \eta(u,v)\phi=0\label{weaku}\\
&&\frac{d}{dt}\int_{\Omega\times Y} v\psi + \int_{\Omega\times Y} d_2\nabla_y v\nabla_y\psi +\alpha k\int_{\Omega\times Y} \eta(u,v)\psi=0,\label{weakv}
\end{eqnarray}
for all $(\varphi,\phi,\psi)\in H_0^1(\Omega)\times L^2(\Omega;H^1(Y))^2$,
and
$$
U(0)= U_I \,\, \mbox{in} \,\, \Omega,\quad u(0)=u_I, \, v(0)=v_I \,\, \mbox{in} \, \,\Omega \times Y.
$$
\end{defn}
\begin{thm} It exists a globally-in-time unique positive and essentially bounded solution $(U, u, v)$ in the sense od Definition \ref{def_weak_formulation}.
\end{thm}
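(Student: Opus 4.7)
The plan is to construct a weak solution via a Galerkin scheme coupled with a Schauder fixed-point argument, to secure positivity and essential boundedness through Stampacchia truncations tailored to the sign structure of $b$ and $\eta$, and finally to derive uniqueness from a Grönwall argument that exploits the Lipschitz hypotheses (A2)--(A3).

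First I would decouple the system. Given a placeholder $\tilde U$ in a closed convex bounded subset $K\subset L^2(S\times\Omega)$, solve the micro-problem (\ref{weaku})--(\ref{weakv}) for $(u,v)$ (with $\tilde U$ substituted into the nonlinear boundary flux on $\Gamma_R$), and then solve the macro-equation (\ref{weakU}) with source term $-\int_{\Gamma_R}b(\tilde U-u)\,d\lambda_y^2$. Each sub-problem, being a linearly-driven parabolic system with globally Lipschitz reaction terms, admits a unique weak solution via a standard finite-dimensional Galerkin basis construction. The induced map $\tilde U\mapsto U$ is then shown to be compact and continuous on $K$ via the bound on $\partial_t U$ in $L^2(S\times\Omega)$ and Aubin--Lions embeddings, so Schauder's theorem furnishes a fixed point that solves the full coupled system.

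Second, the necessary a priori bounds come from three families of tests. Testing (\ref{weakU}) with $U-U^{ext}$, (\ref{weaku}) with $u$, (\ref{weakv}) with $v$, and summing the first two, the boundary integrals on $\Gamma_R$ combine into a term of the form $\int_\Omega\int_{\Gamma_R} b(U-u)(U^{ext}-u)\,d\lambda_y^2\,dx$, which is controlled using $b(z)\le\hat c z$ from (A2) together with a trace estimate on $\partial Y$ uniform in $x$; Grönwall then yields the basic energy bound. Positivity is obtained by testing with the negative parts $(U-U^{ext})^-$, $u^-$, $v^-$: the clause $b(z)=0$ for $z\le 0$ in (A2) and the factorization $\eta(r,s)=R(r)Q(s)$ with $R,Q$ vanishing on nonpositive arguments in (A3) annihilate all reaction and transmission contributions, forcing the negative parts to vanish. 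Essential boundedness follows by Stampacchia truncation applied to $(U-M)^+$, $(u-M)^+$, $(v-M)^+$ for $M$ larger than the $L^\infty$-norms of the initial and boundary data under (A4); the sublinear growth $b(z)\le \hat c z$ and the positivity of $\eta$ make the nonlinear contributions enter with the favourable sign.

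Third, uniqueness follows by subtracting two solutions $(U_i,u_i,v_i)$, testing with the differences, and invoking Grönwall. The main obstacle is the cross-scale boundary term
\[
\int_\Omega\!\!\int_{\Gamma_R}\bigl(b(U_1-u_1)-b(U_2-u_2)\bigr)\bigl((U_1-U_2)-(u_1-u_2)\bigr)\,d\lambda_y^2\,dx,
\]
which couples a macroscopic function with the $\Gamma_R$-trace of a microscopic function: controlling it demands a trace inequality on $\partial Y$ whose constant is uniform in $x\in\Omega$, together with the regularity (A4) of $U^{ext}$ to absorb the resulting $H^1$-contributions. Once this estimate is secured using the Lipschitz hypothesis on $b$, the reaction differences are absorbed via the Lipschitz constants $c_R, c_Q$ combined with the essential boundedness of $u,v$ proved above, and Grönwall closes the argument.
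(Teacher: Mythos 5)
The paper itself gives no proof: it simply cites \cite{Maria}, where the result is established by the direct two-scale Galerkin method that Section 2.3 recapitulates --- the coupled system is discretised at once with the tensor-product basis $\zeta_{jk}=\xi_j\eta_k$, positivity and the explicit $L^\infty$-bounds $m_1,m_2,m_3$ are proved \emph{at the discrete level uniformly in $N$} (this is exactly why the $L^\infty$-stability of the projections $P^N_x,P^N_y$ is hypothesised), energy and improved estimates follow, and compactness (Aubin--Lions plus strong trace convergence on $\Gamma_R$) identifies the limit. Your route is genuinely different: you decouple the system and run a Schauder fixed-point argument for the map $\tilde U\mapsto U$, and you obtain positivity and boundedness by Stampacchia truncation at the continuous level. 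That is a legitimate alternative --- it avoids any $L^\infty$-stability assumption on the spectral bases --- but it buys less for the rest of this particular paper, whose error analysis leans on the discrete maximum-type bounds of Theorem \ref{basic}(i); conversely, the Galerkin route delivers existence and the quantitative discrete bounds in one pass.

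There is one concrete flaw in your sketch: for positivity of $U$ you propose testing with $(U-U^{ext})^-$. That test function would (if the argument closed) prove $U\ge U^{ext}$ in $\Omega$, which is false in general --- the sink $-\int_{\Gamma_R}b(U-u)\,d\lambda^2_y$ can pull $U$ below $U^{ext}$ in the interior, and the boundary term $\int_\Omega\int_{\Gamma_R}b(U-u)(U-U^{ext})^-$ has no sign on the set $\{u<U<U^{ext}\}$. The correct test function is $U^-$ itself, which lies in $H^1_0(\Omega)$ precisely because $U^{ext}\ge 0$ on $\partial\Omega$; moreover the order matters: you must first prove $u\ge 0$ and $v\ge 0$ (testing with $-u^-$, $-v^-$, where the transmission term enters with a favourable sign since $b\ge0$, and $\eta$ vanishes by (A3)), and only then does $b(U-u)=0$ hold on $\{U<0\}$, killing the boundary term in the $U$-equation. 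Your uniqueness and boundedness steps are sound in outline, though the Grönwall closure for uniqueness needs, in addition to the trace inequality on $\partial Y$ (whose constant is automatically uniform in $x$ since $Y$ is a fixed reference cell), an absorption of the resulting $\|\nabla_y(u_1-u_2)\|$ contribution into the diffusive term via Young's inequality --- worth stating explicitly.
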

\begin{proof} We refer the reader to \cite{Maria} for the proof of this result.
\end{proof}

\subsection{Galerkin approximation. Basic (semi-discrete) estimates}\label{technical}\label{23}

Following the lines of \cite{Maria,N-RLJ}, we introduce the  Schauder bases: Let $\{\xi_i\}_{i\in\mathbb{N}}$ be a basis of $L^2(\Omega)$, with $\xi_j \in H^1_0(\Omega)$, forming an orthonormal system (say o.n.s.) with respect to $L^2(\Omega)$-norm. Furthermore, let $\{\zeta_{jk}\}_{j,k\in\mathbb{N}}$ be a basis of $L^2(\Omega\times Y)$, with
\begin{equation}\label{zeta}
\zeta_{jk}(x,y)=\xi_j(x)\eta_k(y),
\end{equation}
where $\{\eta_{k}\}_{k\in\mathbb{N}}$ is a basis of $L^2(Y)$, with $\eta_k\in H^1(Y)$, forming an o.n.s. with respect to $L^2(Y)$-norm.

Let us also define the projection operators on finite dimensional subspaces $P^N_x, P^N_y$ associated to the bases $\{\xi_j\}_{ j\in \mathbb{N}}$, and $\{\eta_k, \}_{ k\in \mathbb{N}}$ respectively. For $(\varphi, \psi)$ of the form
\begin{eqnarray}
\varphi(x) &=& \sum_{j\in\mathbb{N}}a_j\xi_j(x), \nonumber\\
\psi(x,y) &= & \sum_{j,k\in\mathbb{N}} b_{jk}\xi_j(x)\eta_k(y), \nonumber
\end{eqnarray}
we define
\begin{eqnarray}
(P^N_x \varphi)(x) &= & \sum_{j=1}^N a_j\xi_j(x),\label{Projxfi} \\
(P^N_x \psi)(x,y) &= & \sum_{j =1}^N \sum_{k\in \mathbb{N}} b_{jk}\,\sigma_j(x)\eta_{k}(y)\label{Projxpsi} \\
(P^N_y \psi)(x,y)&=&\sum_{j \in\mathbb{N}}\sum_{k=1}^N b_{jk}\,\sigma_j(x)\eta_{k}(y).\label{Projypsi}
\end{eqnarray}
The bases $\{\sigma_j\}_{ j\in \mathbb{N}}$, and $\{\eta_k \}_{ k\in \mathbb{N}}$ are chosen such that the projection operators $P^N_{x}, P^N_y$ are stable with respect to the $L^\infty$-norm and $H^2$-norm; i.e. for a given function the $L^\infty$-norm and $H^2$-norm of the truncations by the projection operators can be estimated by the corresponding norms of the function.
\begin{rem}Apparently, this choice of bases is rather restrictive. It is worth noting that we can remove the requirement that $P^N_{x}, P^N_y$ are stable with respect to the $L^\infty$-norm in the case we work with a globally Lipschtz choice for the  mass-transfer term $b(\cdot)$. We will give detailed explanations on this aspect elesewhere.
\end{rem}
Now, we look for finite-dimensional approximations of order $N\in \mathbb{N}$ for the functions $U_0:=U-U^{ext}, u$, and $v $, of the following form
\begin{eqnarray}
U_0^N(t,x) &=& \sum_{j=1}^N\alpha_j^N(t)\xi_j(x), \label{ansatzU}\\
u^N(t,x,y) &= & \sum_{j,k=1}^N \beta_{jk}^N(t)\xi_j(x)\eta_k(y), \label{ansatzu}\\
v^N(t,x,y)&= & \sum_{j,k=1}^N \gamma_{jk}^N(t)\xi_j(x)\eta_k(y),
\end{eqnarray}
where the coefficients $\alpha_j^N, \beta_{jk}^N, \gamma_{jk}^N, j,k = 1, \ldots, N$ are determined by the following relations:
\begin{eqnarray}
&& \int_\Omega \theta \partial_t U^N_0(t) \varphi dx  + \int_\Omega D\nabla U^N_0(t)\nabla\varphi dx = \label{weakUN}\\
&-&  \int_\Omega \left(\int_{\Gamma_R} b\left((U^N_0+ U^{ext}-u^N)(t)\right) d\lambda^2_y + \theta \partial_t U^{ext}(t) + D \Delta U^{ext}(t)\right) \varphi dx \nonumber\\
&&\int_{\Omega\times Y} \partial_t u^N(t)\phi \,dx dy + \int_{\Omega\times Y} d_1\nabla_y u^N(t) \nabla_y\phi \,dx dy =  \label{weakuN}\\
&& \int_{\Omega}\int_{\Gamma_R} b\left((U^N_0+ U^{ext}-u^N)(t)\right)\phi \,d\lambda^2_y dx  - k\int_{\Omega\times Y} \eta \left(u^N(t), v^N(t)\right)\phi \,dy dx\nonumber \\
&&\int_{\Omega\times Y} \partial_t v^N(t) \psi \, dy dx + \int_{\Omega\times Y} d_2\nabla_y v^N(t) \nabla_y\psi \, dy dx = \label{weakvN} \\
&-& \alpha k\int_{\Omega\times Y} \eta \left(u^N(t), v^N(t)\right)\psi\, dy dx\nonumber
\end{eqnarray}
for all
$\varphi \in \mbox{span}\{\xi_j:\ j\in \{1,\dots,N\}\},$ and $\phi, \psi \in \mbox{span}\{\zeta_{jk}:\ j,k\in \{1,\dots,N\}\},$
and
\begin{eqnarray}
\alpha_j^N(0)& := & \int_\Omega (U_I-U^{ext}(0))\xi_j dx, \label{initUN}\\
\beta_{jk}^N(0) & := & \int_\Omega\int_Y u_I \zeta_{jk} dx dy, \label{inituN}\\
\gamma_{jk}^N(0) & :=& \int_\Omega\int_Y v_I \zeta_{jk}dx dy. \label{initvN}
\end{eqnarray}

\begin{thm}\label{basic}
Assume that the projection operators $P^N_{x}, P^N_y$, defined in (\ref{Projxfi})-(\ref{Projypsi}), are stable with respect to the $L^\infty$-norm and $H^2$-norm, and that (A1)--(A4) are satisfied. Then the following statements hold:
\begin{enumerate}
\item[(i)]  The finite-dimensional approximations $U_0^N(t)$, $u^N(t)$, and $v^N(t)$ are positive and uniformly bounded. More precisely, we have for  a.e. $(x,y)\in\Omega\times Y$, all $t\in S$, and all $N\in \mathbb{N}$
\begin{equation}
0 \leq U_0^N(t,x)\leq m_1, \quad 0\leq u^N(t,x,y)\leq m_2, \quad 0\leq v^N(t,x,y)\leq m_3,
\end{equation}
where
\begin{eqnarray}
m_1&:=& 2 ||U^{ext}||_{L^\infty(S\times \Omega)} + ||U_I||_{L^\infty(\Omega)}, \nonumber\\
m_2&:=&\max\{||u_I||_{L^\infty(\Omega\times Y)},m_1\}, \nonumber\\
m_3&:=&||v_I||_{L^\infty(\Omega\times Y)}.\nonumber
\end{eqnarray}
\item[(ii)] There exists a constant $c>0$, independent of $N$, such that
\begin{eqnarray}
||U_0^N||_{L^\infty(S,H^1(\Omega))} + ||\partial_t U_0^N||_{L^2(S,L^2(\Omega))}\leq c,\label{eng1}\\
||u^N||_{L^\infty(S,L^2(\Omega;H^1(Y)))}+||\partial_t u^N||_{L^2(S,L^2(\Omega;L^2(Y)))}\leq c,\\
||v^N||_{L^\infty(S,L^2(\Omega;H^1(Y)))} + ||\partial_t v^N||_{L^2(S,L^2(\Omega;L^2(Y)))}\leq c,\label{eng3}
\end{eqnarray}
\item[(iii)] Then there exists a constant $c>0$, independent of $N$, such that the following estimates hold
\begin{eqnarray}
 || \nabla_x u^N||_{L^\infty(S, L^2(\Omega\times Y)} +  || \nabla_x v^N||_{L^\infty(S, L^2(\Omega\times Y)} &\leq& c \label{estnablax}\\
||\nabla_y \nabla_x u^N||_{L^2(S, L^2(\Omega\times Y)} +  || \nabla_y \nabla_x v^N||_{L^2(S, L^2(\Omega\times Y)} &\leq& c. \label{estnablaxy}
\end{eqnarray}
\end{enumerate}
\end{thm}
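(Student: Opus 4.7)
My plan is to establish the three items sequentially by energy methods inside the finite-dimensional Galerkin space, exploiting the tensor-product structure of the basis together with the sign/Lipschitz structure in (A2)--(A3). For part (i), I would test each Galerkin identity with the projection onto the Galerkin space of a suitable cut-off of the unknown itself. For positivity of $U_0^N$, take $\varphi = -P^N_x((U_0^N(t))^-)$ in (\ref{weakUN}); admissibility follows from the $L^\infty$-stability of $P^N_x$, and the transmission contribution is non-positive on $\{U_0^N<0\}$ thanks to (A2), so a Gronwall argument forces $(U_0^N(t))^-\equiv 0$. The same scheme with $P^N_x P^N_y$ applied to $(u^N)^-$ and $(v^N)^-$ gives $u^N,v^N\geq 0$. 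For the upper bounds I test with projected cut-offs of $(U_0^N-m_1)^+$, $(u^N-m_2)^+$, $(v^N-m_3)^+$; the hierarchy encoded in $m_1$, $m_2$, $m_3$ together with the sign-preservation in (A3) propagate the comparison through the reaction and transmission terms.

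For part (ii), I would take $\varphi=U_0^N(t)$, $\phi=u^N(t)$, $\psi=v^N(t)$ in (\ref{weakUN})--(\ref{weakvN}) and add the three identities. The two transmission contributions combine into
\[
-\int_\Omega\!\!\int_{\Gamma_R} b(U^N-u^N)(U^N-u^N)\,d\lambda^2_y\,dx \;+\; \int_\Omega\!\!\int_{\Gamma_R} b(U^N-u^N)\,U^{ext}\,d\lambda^2_y\,dx,
\]
the first piece being non-positive by (A2) and the second bounded using $b(z)\le\hat c z$ and the $L^\infty$-bound on $U^{ext}$ from (A4). A trace inequality absorbs the remaining boundary pieces into the $H^1$-norms with arbitrarily small constant, and Gronwall's lemma yields the $L^\infty(S;L^2)\cap L^2(S;H^1)$ parts of (\ref{eng1})--(\ref{eng3}). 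The $L^2(S;L^2)$-bounds on the time derivatives are obtained by subsequently testing with $\partial_t U_0^N$, $\partial_t u^N$, $\partial_t v^N$ (legitimate in the Galerkin space), using (A4) to control the initial gradients.

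Part (iii) is the most delicate step because the equations for $u^N$ and $v^N$ carry no $\Delta_x$, so regularity in $x$ must be inherited from the data and, through the transmission condition, from $U^N$. Using the tensor-product ansatz $u^N=\sum_{j,k}\beta_{jk}^N\xi_j\otimes\eta_k$ together with the $H^2$-stability of $P^N_x$, I would differentiate (\ref{weakuN}) formally in $x_i$: $\Delta_y$ commutes with $\partial_{x_i}$, the reaction term becomes $R'(u^N)\,\partial_{x_i}u^N\,Q(v^N)+R(u^N)\,Q'(v^N)\,\partial_{x_i}v^N$, and the transmission term becomes $b'(U^N-u^N)(\partial_{x_i}U^N-\partial_{x_i}u^N)$. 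Testing the differentiated identities with $\partial_{x_i}u^N$ and $\partial_{x_i}v^N$, summing over $i=1,2$, and invoking the Lipschitz bounds of (A2)--(A3) together with a trace inequality on $\Gamma_R$, I obtain (\ref{estnablax}) by Gronwall; the parabolic dissipation supplied by the $d_1\|\nabla_y\nabla_x u^N\|^2$ and $d_2\|\nabla_y\nabla_x v^N\|^2$ terms on the left-hand side then yields (\ref{estnablaxy}).

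The main obstacle is precisely step (iii): rigorously justifying the formal $x$-differentiation inside the Galerkin scheme, and controlling the coupling $\partial_{x_i}U^N$ that enters the boundary contribution, which at this stage is only known in $L^\infty(S;L^2(\Omega))$ from (ii). The $H^2$-stability of $P^N_x$ is exactly what legitimises the differentiation step, while the two-dimensional trace inequality on $\Gamma_R$ (available via (A5)) is what permits absorbing $\int_{\Gamma_R}(\partial_{x_i}U^N-\partial_{x_i}u^N)^2\,d\lambda^2_y$ into the parabolic gain $d_1\|\nabla_y\partial_{x_i}u^N\|^2$ modulo a Gronwall-compatible remainder.
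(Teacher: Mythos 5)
The paper itself does not prove this theorem: the proof consists of the single remark that the statement ``combines the information stated in Theorem 6.1 and Theorem 6.2 from \cite{Maria}'', so there is no in-paper argument to compare yours against line by line. That said, your sketch follows exactly the route one expects the cited reference to take --- truncation test functions for the $L^\infty$-bounds, energy testing with the unknowns and their time derivatives for (ii), and differentiation in $x$ for (iii) --- and the paper itself corroborates the last point, since the proof of Lemma \ref{improved-regularity} says that Theorem 6.2 of \cite{Maria} employs \emph{difference quotients} in $x$. So the overall architecture is right.

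Two places in your plan are genuinely fragile rather than merely unpolished. First, in part (i) the step ``test with $\varphi=-P^N_x((U_0^N)^-)$'' does not go through on $L^\infty$-stability alone: the diffusion term becomes $\int_\Omega D\,\nabla U_0^N\cdot\nabla P^N_x\bigl((U_0^N)^-\bigr)$, and since $P^N_x((U_0^N)^-)\neq (U_0^N)^-$ this integral has no sign in general; this is the classical obstruction to discrete maximum principles, and it is precisely why the theorem's hypothesis demands stability of the projections in $L^\infty$ \emph{and} $H^2$ --- the commutation/approximation properties needed to control the mismatch $\;P^N_x((U_0^N)^-)-(U_0^N)^-\;$ in the gradient term must be spelled out, not just invoked. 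Second, in part (iii) you write $b'$, $R'$, $Q'$, but (A2)--(A3) give only Lipschitz continuity, so these derivatives need not exist pointwise; the argument must be run with difference quotients $D^s_{x_i}$ and the Lipschitz constants $\hat c$, $c_R$, $c_Q$ in place of chain-rule identities. Moreover $D^s_{x_i}u^N=\sum_{j,k}\beta^N_{jk}\,(D^s_{x_i}\xi_j)\otimes\eta_k$ does not in general lie in $\mathrm{span}\{\zeta_{jk}\}$, so $-D^{-s}_{x_i}(D^s_{x_i}u^N)$ is not an admissible test function without an additional projection (or an assumption on the $x$-basis); you correctly flag this as the main obstacle, but a complete proof has to resolve it rather than record it. The remaining steps (the cancellation of the transmission terms in (ii), the absorption of $\nabla_x U^N$ on $\Gamma_R$ via the interpolation--trace inequality in (iii)) are sound as stated.
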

\begin{proof}
This statement combines the information stated in Theorem 6.1 and Theorem 6.2 from \cite{Maria}. We refer the reader to the cited paper for the proof details.
\end{proof}
With these estimates in hand, we have enough compactness to establish the convergence of the Galerkin approximates to the weak solution of our problem.
\begin{thm}\label{convergence} There exists a subsequence, again denoted by $(U^N_0, u^N, v^N)$, and a limit $(U_0, u, v) \in L^2(S;H^1(\Omega))\times \left[L^2(S;L^2(\Omega;H^1(Y)))\right]^2$, with $(\partial_t U^N_0, \partial_t u^N, \partial_t v^N) \in L^2(S \times \Omega)\times \left[L^2(S \times \Omega \times Y)\right]^2$, such that
\begin{eqnarray}
&& (U^N_0, u^N, v^N) \rightarrow (U_0, u, v)\,\, \mbox{weakly in} \,\,L^2(S;H^1(\Omega))\times \left[L^2(S;L^2(\Omega;H^1(Y)))\right]^2 \quad\nonumber\\
&& (\partial_t U^N_0, \partial_t u^N, \partial_t v^N) \rightarrow (\partial_tU_0, \partial_t u, \partial_t v)\, \, \mbox{weakly in} \,\, L^2 \nonumber\\
&& (U^N_0, u^N, v^N) \rightarrow (U_0, u, v)\,\, \mbox{strongly in} \,\,L^2 \nonumber\\
&& u^N|_{\Gamma_R} \rightarrow u|_{\Gamma_R} \,\, \mbox{strongly in} \,\,L^2(S\times \Omega, L^2(\Gamma_R))\nonumber
\end{eqnarray}
\end{thm}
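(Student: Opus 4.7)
The plan is to extract convergent subsequences by combining the a priori bounds of Theorem \ref{basic} with the classical Banach-Alaoglu and Aubin-Lions compactness principles, and then to promote the interior strong $L^2$ convergence to strong convergence of the trace on $\Gamma_R$ via an interpolation-trace inequality. Writing things out in the order in which I would carry them out:

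\textbf{Weak compactness.} Since each of the spaces $L^2(S,H_0^1(\Omega))$, $L^2(S,L^2(\Omega;H^1(Y)))$, $L^2(S\times\Omega)$, and $L^2(S\times\Omega\times Y)$ is reflexive, the uniform estimates (\ref{eng1})--(\ref{eng3}) and (\ref{estnablax})--(\ref{estnablaxy}) together with a diagonal subsequence extraction yield $U_0^N\rightharpoonup U_0$ in $L^2(S,H_0^1(\Omega))$, $(u^N,v^N)\rightharpoonup(u,v)$ in $L^2(S,L^2(\Omega;H^1(Y)))^2$, and the time derivatives converging weakly in the corresponding $L^2$ spaces. This yields the first two convergence statements.

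\textbf{Strong $L^2$ convergence.} For the macroscopic component I apply the Lions-Aubin lemma with the triple $H_0^1(\Omega)\hookrightarrow\hookrightarrow L^2(\Omega)\hookrightarrow H^{-1}(\Omega)$, using the bounds on $U_0^N$ in $L^2(S,H_0^1(\Omega))$ and on $\partial_t U_0^N$ in $L^2(S,L^2(\Omega))$, to obtain strong convergence in $L^2(S\times\Omega)$. For the two-scale components I combine the $\nabla_y$-bound from (\ref{eng1})--(\ref{eng3}) with the $\nabla_x$-bound from (\ref{estnablax}) to get a uniform estimate for $u^N,v^N$ in $L^\infty(S,H^1(\Omega\times Y))$. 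Since $\Omega\times Y$ is a bounded Lipschitz domain, the embedding $H^1(\Omega\times Y)\hookrightarrow\hookrightarrow L^2(\Omega\times Y)$ is compact, and combined with the uniform $L^2$-bound on the time derivatives, Aubin-Lions yields strong convergence of $u^N,v^N$ in $L^2(S\times\Omega\times Y)$.

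\textbf{Strong trace convergence.} The remaining step, and the main technical hurdle, is $u^N|_{\Gamma_R}\to u|_{\Gamma_R}$ strongly in $L^2(S\times\Omega,L^2(\Gamma_R))$. I would invoke an Ehrling-type interpolation-trace inequality on the microcell $Y$: for every $\delta>0$ and every $w\in H^1(Y)$,
\begin{equation*}
\|w\|_{L^2(\Gamma_R)}^2 \le C\bigl(\delta^{-1}\|w\|_{L^2(Y)}^2 + \delta\|\nabla_y w\|_{L^2(Y)}^2\bigr).
\end{equation*}
Applied with $w=u^N(t,x,\cdot)-u(t,x,\cdot)$ and integrated over $(t,x)\in S\times\Omega$, the gradient term is controlled uniformly in $N$ by (\ref{eng1})--(\ref{eng3}), while the $L^2(Y)$ term tends to zero by the previous step. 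Choosing $\delta$ first small enough to absorb the gradient term below $\eps/2$ and then $N$ large enough to push the $L^2(Y)$ contribution below $\eps/2$ completes the argument. The delicate point here is precisely that weak $H^1$-convergence would not deliver strong trace convergence on its own: it is the combination of strong $L^2(\Omega\times Y)$-convergence from the Aubin-Lions step with the uniform micro-gradient bound that makes the interpolation-trace inequality effective.
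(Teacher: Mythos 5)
Your argument is correct and is essentially the intended one: the paper's ``proof'' is only a citation of Theorem~6.3 in \cite{Maria}, but the estimates it imports in Theorem~\ref{basic} --- in particular the mixed bounds (\ref{estnablax})--(\ref{estnablaxy}) on $\nabla_x u^N,\nabla_x v^N$, which exist precisely because $L^2(\Omega;H^1(Y))$ alone gives no compactness in the $x$-direction --- are exactly the inputs for your Aubin--Lions step on $\Omega\times Y$, and the interpolation-trace inequality you invoke is the same tool the paper later uses (citing \cite{GP}) in the proof of Theorem~\ref{rate}. You correctly identify the two genuinely delicate points (two-scale compactness via the $\nabla_x$ bounds, and trace convergence via interpolation between the strong $L^2(\Omega\times Y)$ limit and the uniform $\nabla_y$ bound), so nothing essential is missing.
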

\begin{proof}
See the proof of Theorem 6.3 in \cite{Maria}.
\end{proof}
In the next section, we address the question we wish to answer:
\begin{center}
\emph{How fast do the subsequences mentioned in Theorem \ref{basic} converge to their unique limit indicated in Theorem
\ref{convergence}?}
\end{center}
\section{Estimating the rate of convergence: The case $Y\subset \Omega\subset \mathbb{R}^2$}

Adapting some of the working ideas mentioned  in \cite{Thomee,error} to this two-spatial-scale scenario, we obtain an {\em a priori} estimate for the convergence rate of the Galerkin scheme constructed in section \ref{technical}.

\subsection{Approximation of smooth two-scale functions}

As preparation for the definition of the finite element solution to our problem, we briefly introduce some concepts concerning  the approximation of smooth functions in $\Omega, Y\subset \mathbb{R}^2$ (taking into account assumption (A5)); see, for instance, \cite{brenner-scott} or \cite{Thomee} for more details.

For simplicity, we let $h$ denote the maximum length of the sides of the triangulations $\mathcal{T}_h$ of both $\Omega$ and $Y$. $h$ decreases as triangulations are made finer. Let's assume that we can construct quasiuniform triangulations (\cite{Thomee}, p.2) and that the angles of these triangulations are bounded from below by uniformly in $h$ positive constants.

Define $V_h:=\mbox{span}\{\xi_j:\ j\in \{1,\dots,N\}\},$  and $B_h:=\mbox{span}\{\eta_k\ : \ k\in \{1,\dots, N\} \}$ where $\xi_j$ and $\eta_k$ are defined as in section \ref{23}. We also introduce $W_h:=\mbox{span}\{\zeta_{jk}:\ j,k\in \{1,\dots,N\}\}$, where $\zeta_{jk}$ are given by (\ref{zeta}).  Note that $W_h:=V_h\times B_h$. 

A given smooth function $\varphi$ in $\Omega$ vanishing on $\partial \Omega$ may be approximated by the interpolant $I_h\varphi$ in the space of piecewise continuous linear functions  vanishing outside $\bigcup\mathcal{T}_h$. Standard interpolation error arguments ensure that for any $\varphi\in H^2(\Omega)\cap H^1_0(\Omega)$, we get
$$||I_h\varphi -\varphi||_{L^2(\Omega)}\leq c h^2 ||\varphi||_{L^2(\Omega)}$$
$$||\nabla (I_h\varphi-\varphi)||_{L^2(\Omega)}\leq c h ||\varphi||_{L^2(\Omega)}.$$

We define the macro and micro-macro Riesz projection  operators (i.e. $\mathcal{R}_h^M$ and $\mathcal{R}_h^m$) in the following manner: 
\begin{eqnarray}
\mathcal{R}_h^M: H^1(\Omega) \to V_h,\\
\mathcal{R}_h^m: L^2(\Omega;H^1(Y)) \to W_h,
\end{eqnarray}
where $R_h^M$ is the standard single-scale Riesz projection, while  $\mathcal{R}_h^m$ is the tensor product of the projection operators
\begin{eqnarray}
P^{\ell 0}&:&L^2(\Omega)\to V_h\\
P^{\ell 1}&:&H^1(Y)\to B_h. 
\end{eqnarray}

Note that this construction of the micro-macro Riesz projection is quite similar to the one proposed in \cite{HS} (cf. especially the proof of Lemma 3.1 {\em loc. cit.}). The only difference is that we do not require any periodic distribution of the microstructure $Y$. Consequently, if one assumes a periodic covering of $\Omega$ by replicates of $Y$ sets, then one recovers the situation dealt with in \cite{HS}.   

\begin{lem}\label{interpolation-error}(Interpolation-error estimates) Let $\mathcal{R}_h^m$ and $\mathcal{R}_h^M$ be the micro and, respectively, macro Riesz's projection operators.   Then there exist the strictly positive constants $\gamma_\ell$ ($\ell\in\{1,2,3,4\}$), which are independent of $h$, such that the Lagrange intepolants $\mathcal{R}_h^m\phi$ and $\mathcal{R}_h^M\varphi$ satisfy the inequalities:
\begin{eqnarray}
||\varphi-\mathcal{R}_h^M\varphi||_{L^2(\Omega)} &\leq & \gamma_1 h^2||\varphi||_{H^2(\Omega)},\label{i1}\\
||\varphi-\mathcal{R}_h^M\varphi||_{H^1(\Omega)}& \leq & \gamma_2 h||\varphi||_{H^2(\Omega)}\label{i2},\\
||\varphi-\mathcal{R}_h^m\phi||_{L^2(\Omega;L^2(Y))}& \leq & \gamma_3 h^2\left(||\phi||_{L^2(\Omega;H^2(Y))\cap L^2(Y;H^2(\Omega))}\right),\label{i3}\\
||\phi-\mathcal{R}_h^m\phi||_{L^2(\Omega;H^1(Y))}&\leq & \gamma_4 h\left(||\phi||_{L^2(\Omega;H^2(Y))\cap L^2(Y;H^2(\Omega))}\right)\label{i4}
\end{eqnarray}
for all $(\varphi,\phi)\in H^2(\Omega)\times \left[L^2(\Omega;H^2(Y))\cap L^2(Y;H^2(\Omega)\right].$
\end{lem}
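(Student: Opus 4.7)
The first two estimates concern only the single-scale macroscopic Riesz projection $\mathcal{R}_h^M$, so my plan is to reduce them to textbook finite-element interpolation theory. Estimate (i2) follows from the $H^1$-quasi-optimality of the Riesz projection applied to the standard piecewise-linear Lagrange interpolant $I_h\varphi$, combined with the elementwise Bramble-Hilbert lemma on a quasiuniform triangulation of the convex, sufficiently smooth domain $\Omega$ given by (A5); estimate (i1) then follows by the classical Aubin-Nitsche duality trick, where the $H^2$-regularity of the auxiliary dual Poisson problem is again guaranteed by convexity of $\Omega$.

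For (i3) and (i4), I exploit the tensor-product structure $\mathcal{R}_h^m = P^{\ell 0}\otimes P^{\ell 1}$: since $P^{\ell 0}$ acts only on $x$ and $P^{\ell 1}$ only on $y$, the two projections commute with each other and each commutes with partial differentiation in the other variable. The key identity is the telescoping decomposition
\[
\phi - \mathcal{R}_h^m\phi = (I - P^{\ell 1})\phi + (I - P^{\ell 0}) P^{\ell 1}\phi.
\]
For (i3), apply Fubini to the $L^2(\Omega\times Y)$-norm: the first summand is controlled pointwise in $x$ by the $L^2(Y)$ second-order projection error of $P^{\ell 1}$ and contributes $\gamma h^2\|\phi\|_{L^2(\Omega;H^2(Y))}$; the second summand is controlled pointwise in $y$ by the $L^2(\Omega)$ projection error of $P^{\ell 0}$, and the $L^2(Y)$-stability of $P^{\ell 1}$ (obtained from its $H^1$-stability and Aubin-Nitsche on the convex set $Y$) together with commutation of $P^{\ell 1}$ with $\partial_x$ yields $\gamma h^2\|\phi\|_{L^2(Y;H^2(\Omega))}$. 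For (i4) the same telescope is used in the $L^2(\Omega;H^1(Y))$ norm: the first summand is bounded by $\gamma h\|\phi\|_{L^2(\Omega;H^2(Y))}$ via the $H^1(Y)$-best approximation of $P^{\ell 1}$, while the second summand splits through $\nabla_y(I - P^{\ell 0}) = (I - P^{\ell 0})\nabla_y$ into two $L^2(\Omega\times Y)$ estimates, each reducible to an $L^2(\Omega)$-error of $P^{\ell 0}$ at an appropriate rate.

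The hardest step will be the mixed term $(I - P^{\ell 0})\nabla_y P^{\ell 1}\phi$ in (i4): naively applying the second-order $L^2(\Omega)$-bound would require $\nabla_y P^{\ell 1}\phi \in L^2(Y; H^2(\Omega))$, equivalent to the unavailable regularity $\phi \in H^1(Y;H^2(\Omega))$. The remedy is to trade one power of $h$ for one fewer $x$-derivative via the first-order estimate $\|(I - P^{\ell 0}) u\|_{L^2(\Omega)} \leq \gamma h\|u\|_{H^1(\Omega)}$ applied to $u = \nabla_y P^{\ell 1}\phi(\cdot,y)$; the mixed regularity $\partial_x\nabla_y\phi\in L^2(\Omega\times Y)$ required to close the estimate is supplied by the intersection assumption $\phi\in L^2(\Omega;H^2(Y))\cap L^2(Y;H^2(\Omega))$, which embeds into $H^1(\Omega;H^1(Y))$ on a product of convex domains. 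Combining with the $L^2$- and $H^1$-stability of $P^{\ell 1}$ in $y$, and its commutation with $x$-derivatives, closes (i4) at the advertised rate $h$.
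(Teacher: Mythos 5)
Your proposal is correct and follows essentially the same route the paper intends: estimates \eqref{i1}--\eqref{i2} are the standard single-scale results the paper delegates to Thom\'ee, and \eqref{i3}--\eqref{i4} are obtained exactly by the tensor-product/telescoping mechanism $\phi-\mathcal{R}_h^m\phi=(I-P^{\ell 1})\phi+(I-P^{\ell 0})P^{\ell 1}\phi$ that the paper only gestures at via the citation of Lemma 3.1 in \cite{HS}. You in fact supply the details the paper omits, and you correctly isolate the one delicate point --- the mixed term $(I-P^{\ell 0})\nabla_y P^{\ell 1}\phi$, where trading a power of $h$ for a mixed derivative and invoking the embedding of $L^2(\Omega;H^2(Y))\cap L^2(Y;H^2(\Omega))$ into $H^1(\Omega;H^1(Y))$ (valid on products of convex domains via tensorized extension operators) closes the argument at the stated rate.
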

\begin{proof} (\ref{i1}) and (\ref{i2}) are standard interpolation-error estimates, see \cite{Thomee}, e.g., while  (\ref{i3}) and (\ref{i4}) are interpolation-error estimates especially tailored for elliptic problems with two-spatial scales structures; see Lemma 3.1  \cite{HS} (and its proof) for a statement refering to the periodic case with $(n-1)$-spatially separated scales. One of the key ideas of the proof is to see the spaces $L^2(\Omega,L^2(Y))$ and $L^2(\Omega, H^1(Y))$ as tensor products of the spaces $L^2(\Omega)$ and $L^2(Y)$, and respectively of $L^2(\Omega)$ and $H^1(Y)$. 
\end{proof}

\begin{rem}
Note that, without essential differences, this study can be done in terms of two distinct triangulations  $\mathcal{T}_{h_M}$ and $\mathcal{T}_{h_m}$, where $h_M$ and $h_m$ are maximum length of the sides of the corresponding  triangulation of the macro and micro domains ($\Omega$ and $Y$).
\end{rem}

Unless otherwise specified, the expressions $|\cdot|$ and $||\cdot||$ denote the $L^2$ and $H^1$ norms, respectively, in the corresponding function spaces.
\newpage

\subsection{Main result. Proof of Theorem \ref{rate}}

\begin{defn}\label{def_disc} (Weak solution of semi-discrete formulation) The triplet $(U_0^h,u^h,v^h)$ is called weak solution of the semi-discrete formulation (\ref{weakuN})-(\ref{weakvN}) if  and only if
\begin{eqnarray}
&& \int_\Omega \theta \partial_t U^h_0(t) \varphi dx  + \int_\Omega D\nabla U^h_0(t)\nabla\varphi dx =\\
&-&  \int_\Omega \left(\int_{\Gamma_R} b\left((U^h_0+ U^{ext}-u^h)(t)\right) d\lambda^1_y + \theta \partial_t U^{ext}(t) + D \Delta U^{ext}(t)\right) \varphi dx \nonumber\\
&&\int_{\Omega\times Y} \partial_t u^h(t)\phi \,dx dy + \int_{\Omega\times Y} d_1\nabla_y u^N(t) \nabla_y\phi \,dx dy =  \label{weakuh}\\
&& \int_{\Omega}\int_{\Gamma_R} b\left((U^h_0+ U^{ext}-u^h)(t)\right)\phi \,d\lambda^1_y dx  - k\int_{\Omega\times Y} \eta \left(u^h(t), v^h(t)\right)\phi \,dy dx\nonumber \\
&&\int_{\Omega\times Y} \partial_t v^h(t) \psi \, dy dx + \int_{\Omega\times Y} d_2\nabla_y v^h(t) \nabla_y\psi \, dy dx = \label{weakvh} \\
&-& \alpha k\int_{\Omega\times Y} \eta \left(u^h(t), v^h(t)\right)\psi\, dy dx\nonumber
\end{eqnarray}
for all
$\varphi \in V_h$ and $(\phi, \psi)\in W_h\times W_h$ and $U_0^h(0)=U_I\in L^2(\Omega)$ and $u^h(0),v^h(0)\in L^2(\Omega\times Y)$.
\end{defn}

\begin{lem}(Improved regularity)\label{improved-regularity} Assume (A1)--(A5) to hold. Then
\begin{eqnarray}
&&U_0^h\in L^2(S;H^2(\Omega))\\
&&u^h,v^h\in L^2(S;L^2(\Omega;H^2(Y)))\cap L^2(S;L^2(Y;H^2(\Omega))).
\end{eqnarray}
\end{lem}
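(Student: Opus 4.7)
The strategy is to exploit the Schauder-basis structure of Section \ref{23}: take $\{\xi_j\}$ and $\{\eta_k\}$ to be eigenfunctions of the Dirichlet Laplacian on $\Omega$ and of the Neumann Laplacian on $Y$, respectively. This is consistent with the $H^2$-stability requirement stated in Section \ref{23}, and ensures that $-\Delta_x V_h \subset V_h$ and $-\Delta_y W_h \subset W_h$, so that the discrete Laplacians of the semi-discrete unknowns are admissible test functions in Definition \ref{def_disc}. The three claimed regularities will be obtained by three separate energy estimates, each feeding in the appropriate discrete Laplacian, isolating a second-derivative norm on the left-hand side, and closing via Young's and Gronwall's inequalities together with $H^2$-elliptic regularity on the convex domains $\Omega$ and $Y$ (assumption (A5)).

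First I handle $U_0^h \in L^2(S;H^2(\Omega))$. Take $\varphi = -\Delta_x U_0^h \in V_h$ in the first equation of Definition \ref{def_disc}. Integrating by parts twice (using $U_0^h \in H_0^1(\Omega)$) one arrives at
\begin{equation*}
\frac{\theta}{2}\frac{d}{dt}\left\Vert\nabla U_0^h\right\Vert_{L^2(\Omega)}^2 + D\left\Vert\Delta U_0^h\right\Vert_{L^2(\Omega)}^2 = (\mathrm{RHS}),
\end{equation*}
and $(\mathrm{RHS})$ is controlled via (A4) for the data contributions involving $\partial_t U^{ext}$ and $\Delta U^{ext}$, via (A2) together with a trace inequality on $\Gamma_R$ for the nonlinear surface integral, and via Young's inequality to absorb a small multiple of $\left\Vert\Delta U_0^h\right\Vert_{L^2(\Omega)}^2$ on the left. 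Integration in time, Gronwall, and $H^2$-elliptic regularity on convex $\Omega$ conclude this step.

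Next I handle $u^h, v^h \in L^2(S;L^2(\Omega;H^2(Y)))$ by testing with $\phi = -\Delta_y u^h$ and $\psi = -\Delta_y v^h$ in $W_h$. The $\Gamma_N$-boundary contribution vanishes thanks to the Neumann-eigenfunction choice of $\eta_k$, while the $\Gamma_R$-contribution combines with the nonlinear flux on the right-hand side through the transmission identity (\ref{micro-macro-bc}) and is controlled via (A2) and the $L^\infty$-bounds of Theorem \ref{basic}(i). The reaction terms are handled by (A3) and the same $L^\infty$-bounds. For the dual estimate $u^h, v^h \in L^2(S;L^2(Y;H^2(\Omega)))$ I exploit the tensor-product ansatz (\ref{ansatzu}): test with $\phi = \Delta_x^2 u^h \in W_h$, which lies in $W_h$ thanks to the Dirichlet-eigenfunction property of $\xi_j$. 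Two integrations by parts in $x$ (zero $x$-trace of $\xi_j$) produce
\begin{equation*}
\frac{1}{2}\frac{d}{dt}\left\Vert\Delta_x u^h\right\Vert_{L^2(\Omega\times Y)}^2 + d_1\left\Vert\Delta_x \nabla_y u^h\right\Vert_{L^2(\Omega\times Y)}^2
\end{equation*}
on the left, and the volumetric reaction term is reduced by one integration by parts in $x$ to a bilinear form in $\nabla_x u^h$ and $\nabla_x v^h$, controllable via the Lipschitz constants of (A3) together with the gradient bounds of Theorem \ref{basic}(iii) (thereby bypassing any nonexistent second derivative of $R$ or $Q$).

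The main obstacle sits in the $\Gamma_R$-contribution of this last step: after integration by parts in $x$ it takes the schematic form $\int_\Omega\!\int_{\Gamma_R} b'(U_0^h + U^{ext} - u^h)(\nabla_x U_0^h + \nabla_x U^{ext} - \nabla_x u^h)\cdot \nabla_x \Delta_x u^h\, d\lambda\, dx$, which couples the trace of a high-order $x$-derivative on $\Gamma_R$ to the macroscopic gradient $\nabla_x U_0^h$. Controlling it requires the interpolation-trace inequality anticipated in the abstract, combined with the $L^\infty$-bounds on $U_0^h$ and $u^h$ from Theorem \ref{basic}(i), the Lipschitz character of $b$, and the a priori bounds of Theorem \ref{basic}(ii)--(iii). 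Once this offending surface term has been absorbed by Young's inequality, Gronwall closes the estimate uniformly in $h$.
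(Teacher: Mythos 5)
Your route is genuinely different from the paper's: the paper obtains $U_0^h\in L^2(S;H^2(\Omega))$ and $u^h,v^h\in L^2(S;L^2(\Omega;H^2(Y)))$ by a \emph{lifting} (elliptic regularity) argument --- reading the equations, for a.e.\ $t$ (and a.e.\ $x$), as elliptic problems with $L^2$ right-hand sides and, for the micro problem, Neumann data $d_1\partial_{n_y}u=b(U-u)$ on the convex domains of (A5) --- and obtains the $H^2(\Omega)$-regularity in $x$ of $u^h,v^h$ by difference quotients in $x$, which use only the Lipschitz bound $|b(z_1)-b(z_2)|\le \hat c\,|z_1-z_2|$ and never a derivative of $b$. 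Your first step (testing with $-\Delta_x U_0^h$) is sound as far as it goes, because there the $\Gamma_R$-term enters only as an $L^\infty(\Omega)$ coefficient multiplying $\Delta_x U_0^h$ in a volume integral, which Young's inequality absorbs.

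The other two steps have genuine gaps, both of the same nature: the nonlinear transmission term lives on $\Gamma_R$, and your test functions put one derivative too many onto that surface integral. First, with $\phi=-\Delta_y u^h$ the Galerkin right-hand side contains $\int_\Omega\int_{\Gamma_R}b(\cdot)\,(-\Delta_y u^h)\,d\lambda_y\,dx$; there is no ``transmission identity'' available to cancel it, since the semi-discrete solution satisfies (\ref{micro-macro-bc}) only weakly and, with Neumann eigenfunctions, $\partial_{n_y}u^h\equiv 0$ on all of $\Gamma$, so the discrete flux cannot combine with $b$. Bounding the surviving term requires the trace of $\Delta_y u^h$ on $\Gamma_R$, i.e.\ (by any trace or interpolation-trace inequality) a piece of $\left\Vert\nabla_y\Delta_y u^h\right\Vert_{L^2(Y)}$ --- third order in $y$ --- which is not controlled by the $d_1\left\Vert\Delta_y u^h\right\Vert^2_{L^2}$ you generate on the left. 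Second, with $\phi=\Delta_x^2u^h$ the situation is worse: the integration by parts in $x$ leaves a nonzero contribution on $\partial\Omega$ (there $b(U^{ext})\neq0$ in general while $\partial_{n_x}\Delta_xu^h\neq0$ for Dirichlet eigenfunctions); the interior term couples to the trace on $\Gamma_R$ of the third-order derivative $\nabla_x\Delta_xu^h$, which the left-hand side $\left\Vert\Delta_x\nabla_yu^h\right\Vert^2$ cannot absorb (the interpolation-trace inequality trades a fraction of a $y$-derivative, not a full extra $x$-derivative); and the chain rule producing $b'$ is not available, since (A2) only makes $b$ Lipschitz. To repair the argument you essentially have to follow the paper: elliptic lifting for the $y$-regularity and difference quotients in $x$ --- while noting that even the paper's own sketch is thin on how a merely Lipschitz $b$ yields two full derivatives in $x$ across $\Gamma_R$.
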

\begin{proof}
Assumption (A5) and a standard lifting regularity argument leads to $U_0^h\in L^2(S;H^2(\Omega))$ and $u^h,v^h\in L^2(S\times \Omega;H^2(Y)))$. 	Employing difference quotients with respect to the variable $x$ (quite similarly to the proof of Theorem 6.2 \cite{Maria}), we can show that  $u^h,v^h\in L^2(S\times Y;H^2(\Omega)))$. We omit the proof details.
\end{proof}
\begin{thm}\label{rate}(Rate of convergence)
Assume (A1)--(A5) are satisfied. If additionally, assumption (A6) holds, then it exists a constant $\mathcal{K}>0$, which is independent of $h$, such that
\begin{eqnarray}
||U_0-U_0^h||_{L^2(S;H^1(\Omega)}^2&+&||u-u^h||_{L^2(S;L^2(\Omega;H^2(Y)))\cap L^2(S;L^2(Y;H^2(\Omega)))}^2\nonumber\\
&+&||v-v^h||_{L^2(S;L^2(\Omega;H^2(Y)))\cap L^2(S;L^2(Y;H^2(\Omega)))}^2\leq \mathcal{K} h^2.
\end{eqnarray}
\end{thm}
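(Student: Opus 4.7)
The plan is to adapt the classical Ritz--Galerkin error argument of Wheeler/Thomée to the two-scale coupled system, the novelty being the handling of the nonlinear trace $b(U-u)$ on $\Gamma_R$.

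First I would split each error into a projection part and a discrete part. Set $\rho_U:=U_0-\mathcal{R}_h^M U_0$, $\theta_U:=\mathcal{R}_h^M U_0-U_0^h$, and analogously $\rho_u,\theta_u,\rho_v,\theta_v$ with the two-scale Riesz projection $\mathcal{R}_h^m$. By Lemma \ref{improved-regularity} the weak solution has enough regularity to feed into Lemma \ref{interpolation-error}, giving
\[
\|\rho_U\|_{L^2(S;H^1(\Omega))}+\|\rho_u\|_{L^2(S;L^2(\Omega;H^1(Y)))}+\|\rho_v\|_{L^2(S;L^2(\Omega;H^1(Y)))}\leq c h,
\]
and similar $L^2$-bounds of order $h^2$. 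The task is therefore reduced to controlling $\theta_U,\theta_u,\theta_v$ with the same rate.

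Next I would derive the error equations by subtracting the semi-discrete formulation (Definition \ref{def_disc}) from the weak formulation (Definition \ref{def_weak_formulation}), testing with $\varphi=\theta_U\in V_h$, $\phi=\theta_u\in W_h$, $\psi=\theta_v\in W_h$. Galerkin orthogonality of $\mathcal{R}_h^M$ and $\mathcal{R}_h^m$ eliminates the gradient contributions of $\rho_U,\rho_u,\rho_v$, leaving identities of the schematic form
\[
\tfrac12\tfrac{d}{dt}|\theta_U|^2 + D\|\theta_U\|^2 = -\!\int_\Omega\!\int_{\Gamma_R}\!\bigl[b(U-u)-b(U_0^h+U^{\mathrm{ext}}-u^h)\bigr]\theta_U\,d\lambda^1_y dx - \int_\Omega \theta \partial_t\rho_U\,\theta_U\,dx,
\]
and analogues for $\theta_u$ (with the opposite-sign boundary term plus the reaction contribution $k\int\eta(u,v)-\eta(u^h,v^h)\theta_u$) and for $\theta_v$.

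The main obstacle is then the nonlinear boundary term. Using (A2), it is bounded by $c\int_\Omega\int_{\Gamma_R}\bigl(|\theta_U|+|\rho_U|+|\theta_u|+|\rho_u|\bigr)|\theta_U|\,d\lambda^1_y dx$, and similarly when tested against $\theta_u$. Since $U,\theta_U,\rho_U$ do not depend on $y$, the $U$-contribution factorizes as $\lambda^1(\Gamma_R)\|\cdot\|_{L^2(\Omega)}^2$ and is treated with Young's inequality. For the $u$-trace on $\Gamma_R\subset\partial Y$, the two-dimensionality of $Y$ (assumption (A5)) allows the interpolation-trace inequality
\[
\|w\|_{L^2(\Gamma_R)}^2\leq c\,|w|_{L^2(Y)}\,\|w\|_{H^1(Y)}\quad\text{for } w\in H^1(Y),
\]
applied pointwise in $x$ and integrated over $\Omega$; this yields, after Young's inequality, a term of the form $\varepsilon\|\nabla_y\theta_u\|^2_{L^2(\Omega\times Y)}+c_\varepsilon|\theta_u|^2_{L^2(\Omega\times Y)}$ plus contributions in the $\rho$-projections. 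The projection contributions involve the constants $\gamma_1,\gamma_3$ from Lemma \ref{interpolation-error} multiplied by $h^2$, and assumption (A6) is exactly what one needs to absorb these back into the coercive diffusion terms on the left-hand sides.

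Finally I would handle the reaction residual $\eta(u,v)-\eta(u^h,v^h)$ via Lipschitz continuity of $R,Q$ (assumption (A3)) and the uniform $L^\infty$-bounds of Theorem \ref{basic}, then sum the three resulting inequalities, use $|\partial_t\rho|$-bounds of order $h$ obtained by differentiating the Riesz projection in time (since $\partial_t U,\partial_t u,\partial_t v$ inherit enough regularity from (A4) and the argument in Lemma \ref{improved-regularity}), and apply Gronwall's lemma on $S$. This delivers $\|\theta_U\|_{L^2(S;H^1)}^2+\|\theta_u\|_{L^2(S;L^2(\Omega;H^1(Y)))}^2+\|\theta_v\|_{L^2(S;L^2(\Omega;H^1(Y)))}^2\leq c h^2$, which combined with the projection bounds from Step 1 closes the proof. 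The genuinely delicate point is the boundary step: one must both dimension-match the codimension-one trace in the two-dimensional $Y$ and verify the smallness condition (A6) to absorb the cross-terms that arise from coupling $\theta_U$ with the trace of $\theta_u$ through $b(\cdot)$.
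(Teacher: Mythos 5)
Your proposal is correct in outline and rests on the same pillars as the paper's proof (energy estimates against the Riesz projections, the interpolation-error estimates of Lemma \ref{interpolation-error} fed by the improved regularity of Lemma \ref{improved-regularity}, the interpolation-trace inequality on $\Gamma_R$ for the nonlinear transmission term, Lipschitz control of $\eta$, Young's inequality and Gronwall), but it is organized differently in two respects. First, you use the classical Wheeler splitting $e=\rho+\theta$ together with Galerkin orthogonality to annihilate the gradient contributions of $\rho_U,\rho_u,\rho_v$; the paper instead works directly with the full errors $e_U,e_u,e_v$, tests with $r^h-U_0^h$ etc., and keeps the projection-error gradient terms explicitly (its term $I_2$), estimating them by $\gamma_2 h\|U_0\|_{H^2}$ and $\gamma_4 h\|u\|_X$. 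Both are legitimate; yours is cleaner where orthogonality applies, the paper's avoids having to define the two-scale Riesz projection via the exact bilinear form. Second, and this is the one place where your route needs an extra ingredient: Galerkin orthogonality shifts the time-derivative burden onto $\int\partial_t\rho\,\theta$, and an $O(h^2)$ bound on $|\partial_t\rho_U|_{L^2}$, $|\partial_t\rho_u|_{L^2}$ requires $\partial_t U_0\in L^2(S;H^2(\Omega))$ and $\partial_t u,\partial_t v\in X$, which is more regularity than Lemma \ref{improved-regularity} provides; you assert this follows from (A4) but it needs to be proved. The paper sidesteps this by keeping the terms $\theta|\partial_t e_U|\,|U_0-r^h|\le \tfrac{\gamma_1\theta}{2}h^2(|\partial_t e_U|^2+\|U_0\|_{H^2}^2)$ and then using (A6) --- the smallness condition $h^2\max\{\gamma_1,\gamma_3\}<1$ --- precisely to compensate the resulting $h^2|\partial_t e|^2$ terms against the $|\partial_t e_u|^2$, $|\partial_t e_v|^2$ contributions retained on the left-hand side of its energy inequality; in your write-up (A6) is instead invoked to absorb projection cross-terms into the diffusion, which is not the role it plays in the paper. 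Either fill in the time-regularity needed for $\partial_t\rho$, or revert to the paper's device of trading $|\partial_t e|\,|\rho|$ against the left-hand side via (A6).
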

\begin{rem} We will compute the constant $\mathcal{K}$ explicitly; see (\ref{K}).
\end{rem}
\begin{proof} (of Theorem \ref{rate})
Firstly, we denote the errors terms by
\begin{eqnarray}
e_U&:=& U_0-U_0^h\nonumber\\
e_u&:=& u-u^h\nonumber\\
e_v&:=& v-v^h.\nonumber
\end{eqnarray}
We  choose as test functions in Definition \ref{def_disc} the triplet
\begin{equation}
(\varphi,\phi,\psi):=(r^h-U_0^h,p^h-u^h,q^h-v^h),
\end{equation}
where the functions $r^h$, $p^h$, and $q^h$ will be chosen in a precise way (in terms of Riesz projections of the unknowns) at a later stage.
We obtain
\begin{eqnarray}
\frac{\theta}{2}\frac{d}{dt}|U_0-U^h|^2&+&D||U-U^h||^2\leq \int_\Omega \theta \partial_t(U_0-U^h)(U_0-U^h)\nonumber\\
&+ &\int_\Omega D\nabla(U_0-U_0^h)\nabla(U_0-U^h)\nonumber\\
&= &\theta\int_\Omega  \partial_t (U_0-U^h)(U_0-r^h)+\int_\Omega D\nabla (U_0-U^h)\nabla (U_0-r^h)\nonumber\\
&+&\theta\int_\Omega  \partial_t (U_0-U^h)(r^h-U^h)+\int_\Omega D\nabla (U_0-U^h)\nabla (r^h-U^h).
\end{eqnarray}
Using Cauchy-Schwarz inequality, we have
\begin{eqnarray}\label{exp}
\frac{\theta}{2}\frac{d}{dt}|U_0-U^h|^2&+&D||U_0-U^h||^2\leq \theta |\partial_t (U_0-U^h)||U-r^h|\nonumber\\
&+&D|\nabla(U_0-U^h)||\nabla(U_0-r^h)|\nonumber\\
&+&\theta |\partial_t (U_0-U^h)||r^h-U^h|+D|\nabla(U_0-U^h)||\nabla(r^h-U^h)|\nonumber\\
&\leq & \theta |\partial_t (U_0-U^h)||U-r^h|+D|\nabla(U_0-U^h)||\nabla(U_0-r^h)| \nonumber\\
&+& \int_\Omega \int_{\Gamma_R}|b(U_0-u)-b(U_0^h-u^h)||r^h-U^h|d \lambda_y^1.
\end{eqnarray}
Noticing that $r^h-U_0=(r^h-U_0)+(U_0-U^h)$, (\ref{exp}) leads to
\begin{eqnarray}\label{alpha}
\frac{\theta}{2}\frac{d}{dt}|e_U|^2+D||e_U||^2&\leq & \theta |\partial_t e_U||U-r^h|+ D|\nabla e_U||\nabla (U_0-r^h)|\nonumber\\
&+& \hat c\int_\Omega\int_{\Gamma_R}\left(|e_U|+|e_u|\right)\left(|r^h-U_0|+|e_U|\right)d\lambda_y^1.
\end{eqnarray}
Proceeding similarly with the remaining two equations, we get:
\begin{eqnarray}\label{beta}
\frac{1}{2}|\partial_t e_u|^2&+&d_1 |\nabla_y e_u |^2 \leq |\partial_t (u-u^h)||u-p^h|+d_1|\nabla(u-u^h)||\nabla(u-p^h)|\nonumber\\
&+& |\partial_t (u-u^h)||p^h-u^h|+d_1|\nabla(u-u^h)||\nabla(p^h-u^h)|\nonumber\\
&\leq & |\partial_t e_u||u-p^h|+d_1|\nabla_y e_u||\nabla_y(u-p^h)|\nonumber\\
&+& \int_\Omega \int_{\Gamma_R}|b(U_0-u)-b(U^h-u^h)||p^h-u^h|d\lambda_y^1\nonumber\\
&+&k\int_{\Omega\times Y}|\eta(u,v)-\eta(u^h,v^h)||p^h-u^h|\nonumber\\
&\leq & |\partial_t e_u||u-p^h|+d_1|\nabla_y e_u ||\nabla(u-p^h)|\nonumber\\
&+ &\hat c\int_\Omega \int_{\Gamma_R}\left(|e_U|+|e_u|\right)\left(|p^h-u|+|e_u|\right)d\lambda_y^1\nonumber\\
&+& k\int_{\Omega\times Y}|R(u)Q(v)-R(u^h)Q(v^h)|\left(|p^h-u|+|e_u|\right).\nonumber\\
\end{eqnarray}
Finally, we also obtain
\begin{eqnarray}\label{gamma}
\int_{\Omega\times Y}|\partial_t e_v|^2 &+&d_2\int_{\Omega\times Y}|\nabla_y e_v|^2 \leq |\partial_t e_v||v-q^h|+d_2|\nabla_y e||\nabla_y (v-q^h)|\nonumber\\
&+&\alpha k\int_{\Omega\times Y}|R(u)Q(v)-R(u^h)Q(v^h)| \left(|q^h-v|+|e_v|\right).
\end{eqnarray}
Putting together (\ref{alpha}), (\ref{beta}), and (\ref{gamma}), we obtain
\begin{eqnarray}
\frac{\theta}{2}\frac{d}{dt}|e_U|^2&+&\frac{1}{2}\frac{d}{dt}|e_u|^2+\frac{1}{2}\frac{d}{dt}|e_v|^2+D||e_U||^2\nonumber\\
&+&d_1||e_u||^2+d_2||e_v||^2\leq \theta |\partial_t e_U||U_0-r^h|\nonumber\\
&+&|\partial_t e_u||u-p^h|+|\partial_t e_v||v-q^h|+D||e_U|||\nabla(U_0-r^h)|\nonumber\\
&+&d_1||e_v|||\nabla(v-p^h)|+d_2||e_v|||\nabla_y(v-q^h)|\nonumber\\
&+&\hat c\int_\Omega \int_{\Gamma_R}\left(|e_U|+|e_u|\right)\left(|r^h-U_0|+|e_U|\right)d\lambda_y^1\nonumber\\
&+&\hat c\int_\Omega\int_{\Gamma_R}\left(|e_U|+|e_u|\right)\left(|p^h-u|+|e_u|\right)d\lambda_y^1\nonumber\\
&+&\int_{\Omega\times Y}k(1+\alpha)|R(u)Q(v)-R(u^h)Q(v^h)|\left(|p^h-u|+|q^h-v|+|e_u|+|e_v|\right) \nonumber\\
&=:&\sum_{\ell=1}^4 I_\ell,\nonumber
\end{eqnarray}
where the terms $I_\ell$ ($\ell\in \{1,\dots,4\}$) are given by
\begin{eqnarray}
I_1 &:= & \theta |\partial_t e_U||U_0-r^h|+|\partial_t e_u||u-p^h|+|\partial_t e_v||v-q^h|\nonumber\\
I_2 &:=& D||\nabla e_U|||\nabla (U_0-r^h)|+d_1|\nabla_y e_u||\nabla_y(u-p^h)|+d_2|\nabla_y e_v||\nabla_y(v-q^h)|\nonumber\\
I_3&:=& \hat c\int_\Omega \int_{\Gamma_R}\left(|e_U|+|e_u|\right)\left(|r^h-U_0|+|p^h-u|+|e_U|+|e_u|\right)d\lambda_y^1\nonumber\\
I_4 &:=& k(1+\alpha)\int_{\Omega\times Y}|R(u)Q(v)-R(u^h)Q(v^h)|\left(|p^h-u|+|q^h-v|+|e_u|+|e_v|\right).\nonumber
\end{eqnarray}
We choose now $r^h,p^h$, and $q^h$ to be the respective Riesz projections of $U_0^h$, $u^h,$ and $v^h$ and estimate each of these $I_\ell$ terms, i.e. we set
\begin{equation}r^h:=\mathcal{R}_h^MU^h,\ p^h:=\mathcal{R}_h^m u^h, \mbox{ and } q^h:=\mathcal{R}_h^m v^h.
\end{equation}
The main ingredients used in getting the next estimates are Young's inequality, the interpolation-error estimates stated in Lemma \ref{interpolation-error}, the improved regularity estimates from Lemma \ref{improved-regularity}, as well as an interpolation-trace inequality (see the appendix in \cite{GP}, e.g.).

Let us denote for terseness $$X:=L^2(S;L^2(\Omega;H^2(Y)))\cap L^2(S;L^2(Y;H^2(\Omega))).$$

We obtain following estimates:

\begin{eqnarray}
|I_1| &\leq & \gamma_1\theta|\partial_te_U|h^2||U_0||_{H^2(\Omega)} + \gamma_3 h^2\left(|\partial_te_u + \partial_te_v \right)\left(||u||_{X}+||v||_X\right)\nonumber\\
&\leq & h^2\frac{\gamma_1\theta}{2}\left(|\partial_t e_U|^2+||U_0||^2_{H^2(\Omega)}\right)+h^2\frac{\gamma_3}{2}\left(|\partial_t e_u|^2+||u||^2_X\right)\nonumber\\
&+& h^2\frac{\gamma_3}{2}\left(|\partial_t e_v|^2+||v||^2_X\right).
\end{eqnarray}

\begin{eqnarray}
|I_2|&\leq& \gamma_2 D||\nabla e_U||h||U||_{H^2(\Omega)}+\gamma_4d_1|\nabla_y e_u|h||u||_{X}+\gamma_4 d_2|\nabla_y e_v| h||v||_X\nonumber\\
&\leq & \epsilon |\nabla e_U|^2+h^2c_\epsilon\gamma_2^2D^2||U_0||^2_{H^2(\Omega)}+\epsilon |\nabla_y e_u|^2+h^2 c_\epsilon \gamma_4^2d_1^2||u||^2_X\nonumber\\
&+& \epsilon |\nabla_y e_v|^2+h^2 c_\epsilon \gamma_4^2d_2^2||v||^2_X\nonumber\\
&\leq & h^2 c^* c_\epsilon \left(\gamma_2^2+2\gamma_4^2\right)\left(D^2+d_1^2+d_2^2\right)\left(||U_0||_{H^2(\Omega)}+||u||_X^2+||v||_X^2\right)\nonumber\\
&+& \epsilon|\nabla e_U|^2+\epsilon|\nabla_ye_u|^2+\epsilon|\nabla_ye_v|^2,
\end{eqnarray}
where the constant $c^*>0$ is sufficiently large.

The estimate on $|I_3|$ is a bit delicate. To get it, we repeatedly use the following interpolation-trace estimate
\begin{equation}
||\varphi||^2_{L^2(\Omega); L^2(\Gamma_R)}\leq \epsilon \int_\Omega|\nabla_y \varphi|_{L^2(Y)}^2+c(c_\epsilon +1)||\varphi||^2_{L^2(\Omega;L^2(Y))},
\end{equation}
for the case when $\varphi\in \{e_u,e_v\}$,
where $\epsilon> 0$ and $c, c_\epsilon \in ]0,\infty[$ are fixed constants. We get
\begin{eqnarray}
|I_3| &\leq & \hat c \lambda(\Gamma_R)\int_\Omega |e_U||r^h-U_0|+\hat c \int_\Omega|r^h-U_0|\int_{\Gamma_R}|e_u|d\lambda_y^1\nonumber\\
&+& \int_\Omega |e_U|\int_{\Gamma_R}|p^h-u|+\hat c\int_\Omega\int_{\Gamma_R}|e_u||p^h-u|d\lambda_y^1 +2\hat c\int_\Omega\int_{\Gamma_R} \left(|e_u|^2+|e_v|^2\right)d\lambda_y^1\nonumber\\
&\leq & \frac{\hat c\lambda(\Gamma_R)}{2}\left(||e_U||^2_{H^2(\Omega)}+\gamma_1h^4||U_0||^2_{H^2(\Omega)}\right)\nonumber\\
&+& \frac{\hat c}{2}\left(\gamma_1 \lambda(\Gamma_R) h^4||U_0||^2_{H^2(\Omega)}+||e_u||_{L^2(\Omega;L^2(\Gamma_R))}\right)\nonumber\\
&+& \frac{\hat c}{2}\left(|\lambda(\Gamma_R)||e_U||^2_{H^2(\Omega)}+\epsilon h^2\gamma_4^2||u||_X^2+c(c_\epsilon+1)\gamma_3h^4||u||^2_X\right)\nonumber\\
&+& \frac{\hat c}{2}\left(\epsilon \int_\Omega |\nabla_y e_u|^2+c(c_\epsilon+1)||e_u||^2_{L^2(\Omega;L^2(Y))}+\epsilon h^2 ||u||^2_X +c(c_\epsilon+1)\gamma_3 h^4 ||u||_X^2\right)\nonumber\\
&+& 2\hat c\lambda(\Gamma_R)|e_U|^2+\epsilon\int_\Omega |\nabla_y e_u|^2+c(c_\epsilon+1)||e_u||^2_{L^2(\Omega ;L^2(Y))}.\nonumber\\
\end{eqnarray}

 In order to estimate from above the term $|I_4|$, we use the structural assumption (A3) on the reaction terms $R(\cdot)$ and $Q(\cdot)$. We obtain
\begin{eqnarray}
|I_4| &\leq & k(1+\alpha)\int_{\Omega\times Y}|R(u)-R(u^h)||Q(v)|+|Q(v)-Q(v^h)||R(u^h)|\times\nonumber\\
&\times&\left(|p^h-u|+|q^h-v|+||e_u|+|e_v|\right)\nonumber\\
&\leq & 3 h^2 k \gamma_3(1+\alpha)(Q_mc_R +R_m c_Q)\left(||u||^2_X+||v||^2_X\right)\nonumber\\
&+&k(1+\alpha)(Q_m c_R+R_mc_Q)\left(|e_u|^2+|e_v|^2\right),
\end{eqnarray}
where $R_m:=\max_{r\in [0,M_2]} \{R(r)\},\ Q_m:=\max_{s\in [0,M_3]} \{R(s)\}$, while $c_R$ and $c_Q$ are the corresponding Lipschitz constants of $R$ and $Q$.

Consequently, we obtain
\begin{eqnarray}\label{sum}
\sum_{\ell=1}^3 |I_\ell|&\leq & h^2\left(\frac{\gamma_1}{2}\theta |\partial_t e_U|^2+\frac{\gamma_3}{2}|\partial_t e_u|^2+|\partial_t e_v|^2\right)+h^2(\mathcal{K}+\mathcal{F}(h))\nonumber\\
& + & \left[k(1+\alpha)(Q_mc_R+R_mc_Q)+\left(c+\frac{\hat c}{2}\right)\right]\left(|e_u|^2+|e_v|^2\right)\nonumber\\
&+& \epsilon|\nabla e_U|^2 + \epsilon\left(2+\frac{\hat c}{2}\right)\int_\Omega|\nabla_y e_u|^2+\epsilon \int_\Omega|\nabla_y e_v|^2,
\end{eqnarray} 
where
\begin{eqnarray}
\mathcal{K}&:= & \frac{\gamma_3}{2}\left(||u||^2_X+||v||^2_X\right)\nonumber\\
&+& \frac{\gamma_1\theta}{2}||U_0||^2_{H^2(\Omega)}+3k\gamma_3(1+\alpha(Q_m c_R+R_m c_Q)\left(||u||_X^2+||v||_X^2\right)\label{K}\\
\frac{\mathcal{F}(h)}{h^2}&:=& 2\epsilon \gamma_4||u||^2_X+\gamma_1\left(1+\frac{\hat c}{2}\lambda(\Gamma_R)\right)||U_0||^2_{H^2(\Omega)}\nonumber\\
&+& 2c(c_\epsilon+1)\gamma_3||u||^2_X.
\end{eqnarray}
Notice that $\mathcal{K}$ is a finite positive constant that is independent of $h$, while   $F:]0,\infty[\to ]0,\infty[$ is a  function of order of $\mathcal{O}(h^2)$ as $h\to 0$.

By (A6) we can compensate the first term of the r.h.s. of (\ref{sum}), while the last three terms from the r.h.s. can be compensated by choosing the value of $\epsilon$ as $\epsilon\in \left]0,\min\{D, d_2,\frac{2d_1}{\hat c+4}\}\right[$. Relying on the way we approximate the initial data, we  use now Gronwall's inequality to conclude the proof of this theorem.
\end{proof}

\section*{Acknowledgments}
We thank Maria-Neuss Radu (Heidelberg) for fruitful discussions on the analysis of two-scale models. Partial financial support from British Council Partnership Programme in Science  (project number PPS RV22) is gratefully acknowledged. 

\bibliographystyle{amsplain}

\bibliographystyle{plain}

\end{document}